%%%
%%%%%%%%%%%%%%%%%%%%%%%%%%%%%%%%%%%%%%%%%%%%%%%%%%%%%%%%%%%%%%%%%%%%%%%%%%
%\documentclass[11pt, reqno,twoside]{article}
\documentclass[11pt]{article}

\setlength{\topmargin}{-1cm}
\setlength{\oddsidemargin}{0pt}
\setlength{\evensidemargin}{0pt}
\setlength{\textwidth}{16cm}
\setlength{\textheight}{22cm}

\usepackage{amsmath,amsfonts,amssymb,amsthm,bbm}
\usepackage{graphics,epsfig}
\usepackage{color}
\usepackage{array}

\pagestyle{myheadings}

\newtheorem{theorem}{Theorem}[section]

\newtheorem{lemma}{Lemma}[section]

\numberwithin{equation}{section}

\allowdisplaybreaks

\renewcommand\today{\number\year/\number\month/\number\day}

\def\qed{\hfill$\Box$\medskip}

%%%%%%%%%%%%%%%%%%%%%%%the beginning of the article%%%%%%%%%%%%%%%%%%%%%%%%%
\begin{document}

\noindent\makebox[60mm][l]{\tt {\large Version:~\today}}

\bigskip
%%%%%%%%%%%%%%%%%%%%%%%%%%%%%%% Title %%%%%%%%%%%%%%%%%%%%%%%%%%%%%%%%%%%%%%\\
\noindent{
\begin{center}
\LARGE\bf Hitting Time Distribution for  Skip-Free Markov Chains: A Simple  Proof
\end{center}
}

\noindent{
\begin{center}Wenming Hong\footnote{School of Mathematical Sciences
\& Laboratory of Mathematics and Complex Systems, Beijing Normal
University, Beijing 100875, P.R. China. Email: wmhong@bnu.edu.cn}  \ \ \
 Ke Zhou\footnote{ School of Mathematical Sciences
\& Laboratory of Mathematics and Complex Systems, Beijing Normal
University, Beijing 100875, P.R. China. Email:zhouke@mail.bnu.edu.cn}
\end{center}
}

\vspace{0.1 true cm}

%%%%%%%%%%%%%%%%%%%%%%%%%%%%%% Abstrct %%%%%%%%%%%%%%%%%%%%%%%%%%%%%%%%%%%%
\begin{center}
\begin{minipage}[c]{12cm}
\begin{center}\textbf{Abstract}\end{center}
\bigskip
 A well-known theorem  for an irreducible skip-free chain with absorbing state $d$, under some conditions, is that the hitting (absorbing) time of state $d$ starting from state $0$  is distributed as the sum of $d$ independent geometric (or exponential) random variables. The purpose of this paper is to present a direct and simple proof of the theorem in the cases of both discrete  and continuous time  skip-free Markov chains. Our proof is to calculate directly the generation functions (or Laplace transforms) of hitting times in terms of the iteration method.\\

\mbox{}\textbf{Keywords:}\quad skip-free, random walk, birth and death chain, absorbing time, hitting time, eigenvalues, recurrence equation.\\
\mbox{}\textbf{Mathematics Subject Classification (2010)}:  60E10, 60J10, 60J27, 60J35.
\end{minipage}
\end{center}

%%%%%%%%%%%%%%%%%%%%%%%%%%%%% Section 1 %%%%%%%%%%%%%%%%%%%%%%%%%%%%%%%%%%%%\\

\section{ Introduction \label{s1}}
The skip-free Markov chain on $\mathbb{Z^{+}}$ is a process for which upward jumps may be only of unit size, and there is no restriction on downward jumps. If a chain start at $0$, and we suppose $d$ is an absorbing state.  An interesting property for the chain is that the hitting time of state $d$ is distributed as a sum of $d$ independent geometric (or exponential) random variables.

There are many authors give out different proofs to the results. For the birth and death chain, the well-known results can be traced back to  Karlin and McGregor \cite{KM}, Keilson \cite{Keillog}, \cite{Keil}. Kent and Longford \cite{Kent} proved the result for the discrete time version (nearest random walk) although  they have not specified the result as usual form (section 2, \cite{Kent}). Fill \cite{F1} gave the first stochastic proof to both nearest random walk and birth and death chain cases via duality which was established in \cite{DF}. Diaconis and Miclo \cite{DM} presented another probabilistic proof for birth and death chain. Very recently, Gong, Mao and Zhang \cite{Mao} gave a similar result in the case that the state space is $\mathbb{Z^{+}}$, they use the well established result to  determine all the eigenvalues or the spectrum of the generator.

For the  skip-free chain, Brown and Shao \cite{BS} first proved  the result in continuous time situation. By using the duality,  Fill \cite{F2} gave a stochastic proof to both discrete  and continuous time cases. The purpose of this paper is to present a direct and simple proof of the theorem in the cases of both discrete  and continuous time  skip-free Markov chains. Our proof is to calculate directly the generation functions (or Laplace transforms) of hitting times in terms of the iteration method.

\begin{theorem}For the discrete-time skip-free random walk:\\
Consider an irreducible skip-free random walk with transition probability $P$ on $\{0,1,\cdots, d\}$ started at $0$, suppose $d$ is an absorbing state. Then the hitting time of state $d$ has the generation function
\begin{equation*}
\varphi_{d}(s)=\prod_{i=0}^{d-1} \left[\frac{(1-\lambda_i)s}{1-\lambda_i s} \right],
\end{equation*}
where $\lambda_0, \cdots, \lambda_{d-1}$ are the $d$ non-unit eigenvalues of $P$.

In particular, if all of the eigenvalues are real and nonnegative,  then the hitting time is distributed as the sum of $d$ independent geometric random variables  with parameters $1-\lambda_i$.\\
\end{theorem}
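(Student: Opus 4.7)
The plan is to reduce $\varphi_d(s)$ to a single entry of the resolvent $(I-sQ)^{-1}$, where $Q$ denotes the restriction of $P$ to the transient set $\{0,1,\ldots,d-1\}$, and then evaluate that entry by exploiting the lower-Hessenberg shape that skip-freeness forces upon $Q$. Skip-freeness implies that the only transient state from which $d$ can be reached in one step is $d-1$, so $P_0(T_d = n) = (Q^{n-1})_{0,d-1}\,p_{d-1,d}$ for every $n\geq 1$, and summing over $n$ gives
$$\varphi_d(s) \;=\; s\,p_{d-1,d}\,\bigl[(I-sQ)^{-1}\bigr]_{0,d-1}.$$
The non-unit eigenvalues of $P$ are precisely the eigenvalues of $Q$, so the denominator produced by Cramer's rule is $\det(I-sQ)=\prod_{i=0}^{d-1}(1-s\lambda_i)$ automatically.

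I would next compute the relevant cofactor using the Hessenberg structure. Because every entry of $Q$ strictly above the superdiagonal vanishes, the minor of $I-sQ$ obtained by deleting row $d-1$ and column $0$ is lower triangular, with diagonal entries $-s\,p_{i,i+1}$ for $i=0,\ldots,d-2$. Cramer's rule, together with the appropriate sign, then yields
$$\varphi_d(s) \;=\; \frac{s^{d}\,\prod_{i=0}^{d-1}p_{i,i+1}}{\prod_{i=0}^{d-1}(1-s\lambda_i)}.$$

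The main obstacle is matching the numerator to the claimed form, i.e.\ proving the identity $\prod_{i=0}^{d-1}p_{i,i+1}=\prod_{i=0}^{d-1}(1-\lambda_i)$. I expect to obtain this for free from the normalization $\varphi_d(1)=1$: irreducibility together with the fact that $d$ is absorbing forces the hitting time to be almost surely finite, so evaluating the displayed ratio at $s=1$ produces the required identity at no cost. Substituting it back immediately gives the asserted product form $\prod_{i=0}^{d-1}(1-\lambda_i)s/(1-\lambda_i s)$.

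The concluding statement about geometric convolutions is then immediate: when each $\lambda_i\in[0,1)$, the factor $(1-\lambda_i)s/(1-\lambda_i s)$ is the probability generating function of a geometric random variable with success probability $1-\lambda_i$, and the product form expresses $\varphi_d(s)$ as the generating function of a sum of $d$ independent such variables.
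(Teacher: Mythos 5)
Your proposal is correct, and it reaches the paper's key intermediate formula $\varphi_d(s)=s^d\prod_{i=0}^{d-1}p_{i,i+1}\big/\det(I_{d-1}-sP_{d-1})$ by a genuinely different route. The paper first uses skip-freeness to decompose $\tau_{0,d}$ into the independent ladder increments $\tau_{0,1},\dots,\tau_{d-1,d}$, then sets up a first-step recurrence for the generating functions $f_{n,n+1}$ and proves by induction (Lemma 2.1) that $p_0\cdots p_{n}s^{n+1}/f_{0,n+1}(s)=\det(I_n-sP_n)$, by matching that recurrence term-by-term with the cofactor expansion of the determinant along its bottom row. You instead bypass the ladder decomposition entirely: you write $\varphi_d(s)=s\,p_{d-1,d}[(I-sQ)^{-1}]_{0,d-1}$ and observe that the Hessenberg structure makes the relevant minor lower triangular, so the cofactor is $s^{d-1}\prod_{i=0}^{d-2}p_{i,i+1}$ in one line (your sign bookkeeping checks out: the two factors of $(-1)^{d-1}$ cancel). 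From that point on the two arguments coincide — identify the non-unit eigenvalues of $P$ with those of $Q$, factor $\det(I-sQ)=\prod(1-\lambda_i s)$, and obtain $\prod p_{i,i+1}=\prod(1-\lambda_i)$ from $\varphi_d(1)=1$. Your version is shorter and needs no induction; the paper's version yields the determinantal formula for the hitting time of every intermediate level $n$ along the way and its recurrence template transfers verbatim to the continuous-time proof in Section 3. The only points worth making explicit in a write-up are (i) that the Neumann series for $(I-sQ)^{-1}$ converges for small $|s|$ and the resulting rational function analytically continues $\varphi_d$ to $s=1$ because $\det(I-Q)\neq 0$ (equivalently, the spectral radius of $Q$ is strictly less than $1$, which is also what justifies setting $s=1$), and (ii) the short argument that $1$ is not an eigenvalue of $Q$ — the same fact the paper itself asserts without proof.
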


\begin{theorem} For the  skip-free birth and death chain:\\
Consider an irreducible  skip-free birth and death chain with generator $Q$ on $\{0,1,\cdots, d\}$ started at $0$, suppose $d$ is an absorbing state. Then the hitting time of state $d$ has the Laplace transform
\begin{equation*}
\varphi_{d}(s)=\prod_{i=0}^{d-1}\frac{\lambda_i}{1-\lambda_i s},
\end{equation*}
where $\lambda_i$ are the $d$ non-zero eigenvalues of $-Q$.

 In particular, if all of the eigenvalues are real and nonnegative, then the hitting time is distributed as the sum of $d$ independent exponential random variables with parameters $\lambda_i$.
 \end{theorem}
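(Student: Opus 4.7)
My plan follows the discrete-case strategy: write $T_d$ as a sum of independent first-passage increments, derive a Laplace-transform recursion for each, and recognize the resulting denominator as a characteristic polynomial of the tridiagonal restriction of $-Q$. By the birth--death structure and the strong Markov property, $T_d=\tau_0+\tau_1+\cdots+\tau_{d-1}$, where $\tau_k$ is the first-passage time from $k$ to $k+1$, and the $\tau_k$ are mutually independent. Setting $h_k(s):=E_k[e^{-s\tau_k}]$, I then have the factorization $\varphi_d(s)=\prod_{k=0}^{d-1}h_k(s)$.

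A one-step argument at state $k$---condition on the exponential sojourn of rate $\mu_k+\nu_k$ (with $\mu_k:=Q_{k,k+1}$, $\nu_k:=Q_{k,k-1}$) and on the direction of the first jump---yields, for $k\ge1$,
\[
h_k(s)\;=\;\frac{\mu_k}{(\mu_k+\nu_k+s)-\nu_k\,h_{k-1}(s)},\qquad h_0(s)=\frac{\mu_0}{\mu_0+s},
\]
since after a downward jump from $k$ to $k-1$ the chain must execute an independent copy of $\tau_{k-1}$ and then a fresh copy of $\tau_k$ by the strong Markov property. Writing $h_k=A_k/B_k$, the iteration collapses to $A_k=\mu_k B_{k-1}$ together with the three-term recurrence
\[
B_k(s)\;=\;(\mu_k+\nu_k+s)\,B_{k-1}(s)\;-\;\nu_k\mu_{k-1}\,B_{k-2}(s),\qquad B_{-1}\equiv1,\ B_0(s)=\mu_0+s.
\]

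The key observation is that this very same three-term recurrence governs the cofactor expansion of $\det\bigl(sI-(-Q_{[0:d-1]})\bigr)$ along its last row, where $Q_{[0:d-1]}$ is the restriction of $Q$ to the transient states. With matching initial data one concludes $B_{d-1}(s)=\prod_{i=0}^{d-1}(s+\lambda_i)$, the $\lambda_i$ being the eigenvalues of $-Q_{[0:d-1]}$, i.e.\ the $d$ non-zero eigenvalues of $-Q$ (the remaining eigenvalue $0$ comes from the absorbing row). The product then telescopes,
\[
\varphi_d(s)\;=\;\prod_{k=0}^{d-1}\frac{\mu_k\,B_{k-1}(s)}{B_k(s)}\;=\;\frac{\prod_{k=0}^{d-1}\mu_k}{B_{d-1}(s)},
\]
and evaluating at $s=0$ together with $\varphi_d(0)=1$ (absorption is almost sure) pins down $\prod_k\mu_k=\prod_i\lambda_i$, which delivers the stated product formula. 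Under the spectral positivity hypothesis the partial-fraction expansion then exhibits $T_d$ as a convolution of independent exponentials.

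The main obstacle is the identification in the previous paragraph: recognizing the nonlinear Laplace iteration and the tridiagonal cofactor expansion as two views of the same three-term recurrence with the same initial data. Once this spectral bridge is in place, the telescoping and the final assembly are mechanical, and the argument runs in perfect parallel with the discrete case in Theorem~1.1 (with $s$ replacing $e^{-s}$ and exponential sojourns replacing geometric ones).
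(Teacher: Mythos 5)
Your overall architecture is exactly the paper's: factor the hitting time into successive upward first-passage times, derive a recursion for their Laplace transforms by a first-jump decomposition, identify the resulting denominator with $\det(sI_{d-1}-Q_{d-1})$, and normalize via $\varphi_d(0)=1$. However, there is a gap in generality. Despite the wording ``birth and death,'' the generator $Q$ in this theorem is that of a general skip-free (upward) chain: its row $n$ contains entries $\beta_{n,0},\beta_{n,1},\dots,\beta_{n,n-1}$, i.e.\ downward jumps of arbitrary size are allowed. Your continued-fraction recursion
\[
h_k(s)=\frac{\mu_k}{(\mu_k+\nu_k+s)-\nu_k\,h_{k-1}(s)}
\]
presupposes that the only downward jump from $k$ is to $k-1$, so your argument proves the theorem only for a genuinely tridiagonal $Q$. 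In the general case the one-step decomposition reads
\[
\widetilde f_{n,n+1}(s)=\frac{\alpha_n}{\gamma_n+s}+\sum_{k=1}^{n}\frac{\beta_{n,n-k}}{\gamma_n+s}\,\widetilde f_{n-k,n+1}(s),
\]
and since $\widetilde f_{n-k,n+1}=\widetilde f_{n-k,n-k+1}\cdots \widetilde f_{n,n+1}$ the recursion still closes, but the reciprocal quantities $\widetilde g_{0,n+1}(s)=\alpha_0\cdots\alpha_n/\widetilde f_{0,n+1}(s)$ now satisfy a full $(n+1)$-term linear recurrence, which must be matched against the complete bottom-row cofactor expansion of $\det(sI_n-Q_n)$ (this is the paper's Lemma 3.1, the analogue of Lemma 2.1), not just the three-term expansion of a tridiagonal determinant. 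Your proof needs this extension to cover the stated class of chains; within the tridiagonal special case it is correct and the identification of your $B_k$ with the characteristic polynomials is sound.

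One further remark in your favor: your derivation yields $\varphi_d(s)=\prod_{i=0}^{d-1}\lambda_i/(s+\lambda_i)$, which is the correct Laplace transform of a sum of independent exponentials with rates $\lambda_i$; the expression $\prod_i \lambda_i/(1-\lambda_i s)$ displayed in the theorem (and at the end of the paper's proof) appears to be a typographical slip, since the paper's own computation produces $\alpha_0\cdots\alpha_{d-1}/\det(sI_{d-1}-Q_{d-1})=\prod_i\lambda_i/(s+\lambda_i)$. So do not force your final formula to match the printed one.
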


%%%%%%%%%%%%%%%%%%%%%%%%%%%%% Section 2 %%%%%%%%%%%%%%%%%%%%%%%%%%%%%%%%%%%%\\
\section{ Proof of Theorem 1.1\label{s2}}
Define the transition probability matrix $P$ as
%$P$ is the transition probability matrix.
\begin{equation*}
P=\left(
 \begin{array}{ccccccc}
  r_0 & p_0 & \\
   q_{1,0} & r_1 & p_1 & \\
   %q_{2,0} & q_{2,1} & r_3 & p_2 & \\
   %q_{30} & q_{31} & q_{32} & r_4 & p_3 & \\
   \ddots & \ddots & \ddots & & \ddots & \ddots & \\
   q_{d-1,0}& q_{d-1,1}& q_{d-1,2}& & \cdots &  r_{d-1} & p_{d-1} \\
    &  &  &  &  &  & 1\\
     \end{array}
      \right)_{(d+1)\times(d+1)},
\end{equation*}and for $0\leq n\leq d-1$, $P_{n}$ denote the first $n+1$ rows and first $n+1$ lines of $P$.
%\begin{equation*}
% P_n=\left(
%             \begin{array}{cccccc}
%               r_0 & p_0 & \\
%               q_{1,0} & r_1 & p_1 & \\
%               %q_{2,2} & q_{2,1} & r_2 & p_2 & \\
%               %q_{31} & q_{32} & q_{33} & r_4 & p_3 & \\
%               \ddots & \ddots & \ddots & \ddots & \ddots & p_{n-1}\\
%               q_{n,0}& q_{n,1}& q_{n,2}& \cdots & \cdots &  r_{n} \\
%             \end{array}
%           \right)_{(n+1)\times(n+1)},~~\text{for}~ 0\leq n\leq d-1.
%\end{equation*}
%\begin{equation*}\tau_{i,i+j}=\text{inf}\{k;~X_{0}=i;X_{k}=i+j\}.\end{equation*}

Let $\tau_{i,i+j}$ be the hitting time of state $i+j$ starting from $i$. By the Markov property,
we have 
\begin{equation}
\label{2.1}\tau_{i, i+j}=\tau_{i,i+1}+\tau_{i+1,i+2}+\cdots+\tau_{i+j-1, i+j}.
\end{equation}
If  $f_{i,i+1}(s)$ is the generation function of $\tau_{i,i+1}$,
\begin{equation*}f_{i,i+1}(s)=\mathbb{E}s^{\tau_{i,i+1}}~~~\text{for}~ 0\leq i\leq d-1, \end{equation*}
(\ref{2.1}) says that \begin{equation*}
f_{i,i+j}(s)=f_{i,i+1}(s)\cdot f_{i+1,i+2}(s)\cdot\cdots \cdot f_{i+j-1,i+j}(s),~~\text{for}~ 1\leq j\leq d-i.
\end{equation*}

Let 
%\begin{equation*}g_{0,0}(s)=1,~~~g_{0,k}(s)=\frac{p_{0}p_{1}\cdots p_{k-1}}{f_{0,k}(s)}s^{k},~~\text{for}~ 1\leq k\leq d\end{equation*}
\begin{equation*}g_{0,0}(s)=1,~~~g_{i,i+j}(s)=\frac{p_{i}p_{i+1}\cdots p_{i+j-1}}{f_{i,i+j}(s)}s^{j},~~\text{for}~ 1\leq j\leq d-i.\end{equation*}

\begin{lemma}Define $I_n$ as a $(n+1)\times(n+1)$ identity matrix. We have
\begin{equation}\label{lm}
g_{0,n+1}(s)=\text{det}(I_n-sP_{n}),~~~\text{for}~ 0\leq n\leq d-1.
\end{equation}
\end{lemma}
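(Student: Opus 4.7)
The plan is to prove the identity by induction on $n$, by demonstrating that both $g_{0,n+1}(s)$ and $D_n := \det(I_n - sP_n)$ satisfy the common recurrence
\begin{equation*}
X_n = (1 - sr_n)\, X_{n-1} - \sum_{j=0}^{n-1} q_{n,j}\, p_j p_{j+1}\cdots p_{n-1}\, s^{\,n-j+1}\, X_{j-1},
\end{equation*}
with the convention $X_{-1} = 1$ (which matches $g_{0,0}(s) = 1$). Together with the base case $X_0 = 1 - sr_0$, which follows by first-step analysis at state $0$ (it gives $f_{0,1}(s)(1-sr_0) = sp_0$, hence $g_{0,1}(s) = sp_0/f_{0,1}(s) = 1-sr_0 = D_0$), strong induction then forces $g_{0,n+1}(s) = D_n$ for every $n$.

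For the $g$-side, I would apply first-step analysis to $\phi_n := f_{n,n+1}(s)$: conditioning on the first jump out of $n$ gives $\phi_n = sp_n + sr_n\phi_n + s\sum_{j<n} q_{n,j} f_{j,n+1}(s)$, and rewriting $f_{j,n+1}(s) = f_{j,n}(s)\,\phi_n$ via \eqref{2.1} produces
\begin{equation*}
\frac{sp_n}{\phi_n} = 1 - sr_n - s\sum_{j=0}^{n-1} q_{n,j}\, f_{j,n}(s).
\end{equation*}
The definition of $g$ yields both $g_{0,n+1}(s)/g_{0,n}(s) = sp_n/\phi_n$ and $f_{j,n}(s) = p_j\cdots p_{n-1}\, s^{n-j}\, g_{0,j}(s)/g_{0,n}(s)$; multiplying through by $g_{0,n}(s)$ and substituting gives the recurrence directly.

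For the $D$-side, I would carry out a two-step cofactor expansion. First, expand $D_n$ along the last row, whose entries are $(-sq_{n,0},\ldots,-sq_{n,n-1},\,1-sr_n)$: the diagonal term contributes $(1-sr_n)D_{n-1}$, and each off-diagonal cofactor produces $(-1)^{n+k}(-sq_{n,k})\,T_k^{(n)}$, where $T_k^{(n)}$ is the determinant of the matrix obtained from $I_n - sP_n$ by deleting row $n$ and column $k$. Each such $T_k^{(n)}$ has sparse last column $(0,\ldots,0,-sp_{n-1})^{\top}$, so a second expansion along that column produces the reduction $T_k^{(n)} = -sp_{n-1}\,T_k^{(n-1)}$ for $k \le n-2$, with terminal value $T_{n-1}^{(n)} = -sp_{n-1}\,D_{n-2}$. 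Iterating yields
\begin{equation*}
T_k^{(n)} = (-s)^{n-k}\, p_k p_{k+1}\cdots p_{n-1}\, D_{k-1}, \qquad 0 \le k \le n-1,
\end{equation*}
and on substituting back, the sign factors $(-1)^{n+k}$ and $(-1)^{n-k}$ from the two expansions cancel, reproducing precisely the claimed recurrence.

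The main technical obstacle will be this second cofactor expansion: the signs have to be tracked carefully through the iterated reduction, and one must verify that the recursion terminates cleanly at $k = 0$ via the convention $D_{-1} = 1$ (which is exactly what matches $g_{0,0}(s) = 1$). Once the two recurrences are shown to coincide, induction on $n$ closes the argument.
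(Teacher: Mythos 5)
Your proposal is correct and follows essentially the same route as the paper: first-step analysis at state $n$ combined with the definition of $g$ yields the recurrence for $g_{0,n+1}$, cofactor expansion of $\det(I_n-sP_n)$ along the bottom row yields the identical recurrence for $D_n$ (your iterated expansion along the sparse last column just fills in the paper's "by some calculation" evaluation of the cofactors $A_{n,k}=(-s)^{n-k}(-1)^{n+k}p_k\cdots p_{n-1}D_{k-1}$), and induction with the same base case $g_{0,1}(s)=1-r_0s=D_0$ closes the argument. No gap.
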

\begin{proof}
We will give a key recurrence to proof this lemma. By decomposing the first step ,the generation function of $\tau_{n,n+1}$ satisfy
 \begin{equation} \label{key}
 \begin{split} f_{n,n+1}(s)&=r_{n}sf_{n,n+1}(s)+p_{n}s+q_{n,n-1}sf_{n-1,n+1}(s)+q_{n,n-2}sf_{n-2,n+1}(s)+\\
&~~~~~\cdots+q_{n,0}sf_{0,n+1}(s).
\end{split}
\end{equation}
Recall the definition of $g_{i,i+j}(s)$, substitute it into the formula above, we have
%\begin{equation*}\begin{split}g_{0,i+1}(\mathbf{s})&=(1-r_{i}s_{0})g_{0,i}(\mathbf{s})-q_{i,i-1}s_{1}p_{i-1}g_{0,i-1}(\mathbf{s})-q_{i,i-2}s_{2}p_{i-2}p_{i-1}g_{0,i-2}(\mathbf{s})-\\
%&~~~~~\cdots-q_{i,0}s_{i}p_{0}p_{1}\cdots p_{i-1}g_{0,0}(\mathbf{s}) \end{split}\end{equation*}
 \begin{equation}\label{f 2.5}\begin{split}g_{0,n+1}(s)&=(1-r_{n}s)g_{0,n}(s)-q_{n,n-1}s^{2}p_{n-1}g_{0,n-1}(s)-q_{n,n-2}s^{3}p_{n-2}p_{n-1}g_{0,n-2}(s)-\\
&~~~~~\cdots-q_{n,0}s^{n+1}p_{0}p_{1}\cdots p_{n-1}g_{0,0}(s). \end{split}\end{equation}  Use the notation $A_{i,j}$ to denote the algebraic complement of the position $(i+1,j+1)$ in $I_n-sP_{n}$. By expanding the bottom row of the matrix, we obtain\begin{equation}\label{f 2.6}\text{det}(I_n-sP_{n})=(1-r_{n}s)A_{n,n}-q_{n,n-1}sA_{n,n-1}-q_{n,n-2}sA_{n,n-2}-\cdots-q_{n,0}sA_{n,0}.\end{equation}
By some calculation, we can deduce \begin{equation*}A_{n,n}=\text{det}(I_{n-1}-sP_{n-1}),~~A_{n,0}=p_{0}p_{1}\cdots p_{n-1}s^{n},\end{equation*} and for $1\leq i< n$,\begin{equation*} A_{n,n-i}=p_{n-i}p_{n-i+1}\cdots p_{n-1}s^{i}\text{det}(I_{n-i-1}-sP_{n-i-1}).\end{equation*}  Now we prove the lemma by  induction. At first, $g_{0,1}(s)=\text{det}(I_0-sP_{0})=1-r_{0}s$. If (\ref{lm}) holds for $n< k$, we calculate $g_{0,k+1}(s)$. By (\ref{f 2.5}),
 \begin{equation*}\begin{split}g_{0,k+1}(s)&=(1-r_{k}s)\text{det}(I_{k-1}-sP_{k-1})-q_{k,k-1}s^{2}p_{k-1}\text{det}(I_{k-2}-sP_{k-2})-\\
&~~~~~q_{k,k-2}s^{3}p_{k-2}p_{k-1}\text{det}(I_{k-3}-sP_{k-3})-\cdots-q_{k,0}s^{k+1}p_{0}p_{1}\cdots p_{k-1}. \end{split}\end{equation*}
Formula (\ref{f 2.6}) tell us that $g_{0,k+1}(s)=\text{det}(I_k-sP_{k})$. The proof is complete.
\end{proof}

\noindent{\it  Proof of Theorem 1.1 }~~Denote $\varphi_{d}(s)$ the generation function of $\tau_{0,d}$, then $\varphi_{d}(s)=f_{0,d}(s)$. By (\ref{2.1}) and (\ref{lm})
%and $\text{det}(I-sP)=(1-s)\text{det}(I-sP_{d-1})$
 ,  we have
\begin{equation*}\begin{split}\varphi_{d}(s)&=f_{0,1}(s)f_{1,2}(s)\cdots f_{d-1,d}(s)\\
&=\frac{p_{0}p_{1}\cdots p_{d-1}s^{d}}{g_{0,d}(s)}=\frac{p_{0}p_{1}\cdots p_{d-1}s^{d}}{\text{det}(I_{d-1}-sP_{d-1})}.\end{split}\end{equation*}
 It is easy to prove that $1$ is the unique unit eigenvalue of $P$, and for $i=0,2\cdots d-1$, $\lambda_i$ are the $d$ non-unit eigenvalues. So \begin{equation*}\text{det}(I_{d-1}-sP_{d-1})=(1-\lambda_{0}s)(1-\lambda_{1}s)\cdots(1-\lambda_{d-1}s).\end{equation*}
By Lemma 2.1 and the definition of $g_{0,d}(s)$,\begin{equation*}\begin{split}
p_{0}p_{1}\cdots p_{d-1}&=s^{-d}f_{0,d}(s)\text{det}(I_{d-1}-sP_{d-1})\\
&=s^{-d}f_{0,d}(s)(1-\lambda_{0}s)(1-\lambda_{1}s)\cdots(1-\lambda_{d-1}s).\end{split}\end{equation*}
Let $s=1$, because $f_{0,d}$ is a generation function, $f_{0,d}(1)=1$. Then\begin{equation*}
p_{0}p_{1}\cdots p_{d-1}=(1-\lambda_{0})(1-\lambda_{1})\cdots(1-\lambda_{d-1}).\end{equation*}
 As a consequence we have\begin{equation*}\begin{split}
\varphi_{d}(s)&=\frac{(1-\lambda_{0})(1-\lambda_{1})\cdots(1-\lambda_{d-1})s^{d}}{(1-\lambda_{0}s)(1-\lambda_{1}s)\cdots(1-\lambda_{d-1}s)}\\
&=\prod_{i=0}^{d-1} \left[\frac{(1-\lambda_i)s}{1-\lambda_i s} \right].\end{split}\end{equation*}
\qed
%%%%%%%%%%%%%%%%%%%%%%%%%%%%% Section 3%%%%%%%%%%%%%%%%%%%%%%%%%%%%%%%%%%%%\\
\section{ Proof of Theorem 1.2\label{s3}}
 Denote the generator $Q$ of the skip-free Markov chain  as
\begin{equation*}
Q=\left(
\begin{array}{ccccccc}
        -\gamma_0 & \alpha_0 & \\
   \beta_{1,0} & -\gamma_1 & \alpha_1 & \\
   %q_{2,0} & q_{2,1} & r_3 & p_2 & \\
   %q_{30} & q_{31} & q_{32} & r_4 & p_3 & \\
   \ddots & \ddots & \ddots & & \ddots & \ddots & \\
   \beta_{d-1,0}& \beta_{d-1,1}& \beta_{d-1,2}& & \cdots &  -\gamma_{d-1} & \alpha_{d-1} \\
    &  &  &  &  &  & 0\\
     \end{array}
      \right)_{(d+1)\times(d+1)},
\end{equation*}and for $0\leq n\leq d-1$, $Q_{n}$ denote the sub-matrix of the first $n+1$ rows and first $n+1$ lines of $Q$. $\tau_{i,i+j}$ be the hitting time of state $i+j$ starting from $i$. The idea of proof  is similar as Theorem 1.1. We just give a briefly description here.

%Define $Q_{n}$ as a  submatrix which is the first $n$ rows and first $n$ lines of $Q$: \begin{equation*}
%Q_n=\left(
%             \begin{array}{cccccc}
%               -\gamma_0 & \alpha_0 & \\
%               \beta_{1,0} & -\gamma_1 & \alpha_1 & \\
%               %q_{2,2} & q_{2,1} & r_2 & p_2 & \\
%               %q_{31} & q_{32} & q_{33} & r_4 & p_3 & \\
%               \ddots & \ddots & \ddots & \ddots & \ddots & \alpha_{n-1}\\
%               \beta_{n,0}& \beta_{n,1}& \beta_{n,2}& \cdots & \cdots &  -\gamma_{n} \\
%             \end{array}
%           \right)_{(n+1)\times(n+1)},~~\text{for}~ 0\leq n\leq d-1.
%\end{equation*}
%{\blue{ Let $Q_{n}=P_{n}-I_{n}$, it is not ambiguous to denote $Q_{n}$ as the generator of the continuous time skip-free birth and death chains.
%$\tau_{i,i+j}$ has the same meaning. In this section $p_{i}, q_{i,j}, r_{i} $ have a different meaning.}}

It is well known that the skip-free chain on the finite state  has an simple structure.
 The process start at $i$, it stay there with an $\mbox{Exponential}~(\gamma_i) $ time,
 then jumps to $i+1$ with probability $\frac{\alpha_i}{\gamma_i}$, to $i-k$
 with probability $\frac{\beta_{i,i-k}}{\gamma_i}~(1\leq k\leq i).$  Let   $\widetilde{f}_{i,i+j}(s)$ be
 the Laplace transform of $\tau_{i,i+j}$,
%Define\begin{equation}
%\label{f 3.1}
%\tau^{+}_{i}=\tau_{i,i+1}-\xi_i.\end{equation} where $\xi_i$ is a random variable with \emph{Exponential$(1-r_i)$} distributions.

\begin{equation*}\widetilde{f}_{i,i+j}(s)=\mathbb{E}e^{-s\tau_{i,i+j}}. \end{equation*}
Recall that if a random variable $\xi\sim\mbox{Exponential}~(\theta)$, \begin{equation*}\mathbb{E}e^{-s\xi}=\frac{\theta}{\theta+s}. \end{equation*}
By decomposing the trajectory at the first jump, 
\begin{equation*} \label{keykey}\begin{split} \widetilde{f}_{n,n+1}(s)&=\frac{\gamma_n}{\gamma_n+s}\frac{\alpha_n}{\gamma_n} +\frac{\gamma_n}{\gamma_n+s}\frac{\beta_{n,n-1}}{\gamma_n}\widetilde{f}_{n-1,n+1}(s)+\frac{\gamma_n}{\gamma_n+s}\frac{\beta_{n,n-2}}{\gamma_n}\widetilde{f}_{n-2,n+1}(s)+\\
&~~~~~\cdots+\frac{\gamma_n}{\gamma_n+s}\frac{\beta_{n,0}}{\gamma_n}\widetilde{f}_{0,n+1}(s)\\
&=\frac{\alpha_n}{\gamma_n+s}+\frac{\beta_{n,n-1}}{\gamma_n+s}\widetilde{f}_{n-1,n+1}(s)+\frac{\beta_{n,n-2}}{\gamma_n+s}\widetilde{f}_{n-2,n+1}(s)+\cdots+\frac{\beta_{n,0}}{\gamma_n+s}\widetilde{f}_{0,n+1}(s).\end{split}\end{equation*}
 Define \begin{equation*}\widetilde{g}_{0,0}(s)=1,~~~\widetilde{g}_{i,i+j}(s)=\frac{\alpha_{i}\alpha_{i+1}\cdots \alpha_{i+j-1}}{\widetilde{f}_{i,i+j}(s)},~~\text{for}~ 1\leq j\leq d-i.\end{equation*}
% where\begin{equation*}h_{i+j}(s)=(\frac{1-r_i}{1-r_i-s})(\frac{1-r_{i+1}}{1-r_{i+1}-s})\cdots (\frac{1-r_{i+j-1}}{1-r_{i+j-1}-s}) \end{equation*}

  The following lemma can be  proved by use the method similar  as  Lemma 2.1, we omit the details.
\begin{lemma}
\begin{equation*} \widetilde{g}_{0,n+1}(s)=\text{det}(sI_{n}-Q_{n}),~~~\text{ for}~ 0\leq n\leq d-1.\end{equation*}
\end{lemma}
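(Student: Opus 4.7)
The plan is to mirror the proof of Lemma 2.1, using the natural dictionary $r_n \leftrightarrow -\gamma_n$, $p_n \leftrightarrow \alpha_n$, $q_{n,j} \leftrightarrow \beta_{n,j}$, with the discrete one-step factor $s$ replaced by the Laplace factor $1/(\gamma_n+s)$ and the matrix $I_n - sP_n$ replaced by $sI_n - Q_n$. The strategy is induction on $n$: derive a linear recurrence for $\widetilde{g}_{0,n+1}(s)$ from the first-jump decomposition, derive the same recurrence for $\det(sI_n - Q_n)$ from a cofactor expansion along the last row, and match them.

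For the $\widetilde{g}$ side, I would first use the strong Markov property $\widetilde{f}_{0,n+1}(s) = \widetilde{f}_{0,n-k}(s)\widetilde{f}_{n-k,n+1}(s)$ together with the definition of $\widetilde{g}$ to write
\[
\widetilde{f}_{n-k,n+1}(s) = \frac{\alpha_{n-k}\alpha_{n-k+1}\cdots\alpha_n\,\widetilde{g}_{0,n-k}(s)}{\widetilde{g}_{0,n+1}(s)}, \qquad 0 \leq k \leq n.
\]
Substituting these expressions into the displayed key recurrence for $\widetilde{f}_{n,n+1}(s)$ and clearing denominators by multiplying through by $(\gamma_n+s)\widetilde{g}_{0,n+1}(s)/\alpha_n$ yields
\[
\widetilde{g}_{0,n+1}(s) = (s+\gamma_n)\widetilde{g}_{0,n}(s) - \sum_{k=1}^{n}\beta_{n,n-k}\alpha_{n-k}\cdots\alpha_{n-1}\,\widetilde{g}_{0,n-k}(s).
\]

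For the determinant side, I would expand $\det(sI_n - Q_n)$ along its last row, whose entries are $(-\beta_{n,0},\ldots,-\beta_{n,n-1}, s+\gamma_n)$. The cofactor at $(n+1,n+1)$ is $\det(sI_{n-1} - Q_{n-1})$. For $1 \leq k \leq n$, the minor obtained by removing the last row and the $(n-k+1)$-th column is block triangular: its upper-left $(n-k)\times(n-k)$ block coincides with $sI_{n-k-1} - Q_{n-k-1}$ (the upper-right block vanishes thanks to the super-diagonal structure of $Q_n$), while its lower-right $k \times k$ block is lower triangular with diagonal entries $-\alpha_{n-k}, -\alpha_{n-k+1},\ldots,-\alpha_{n-1}$. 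The cofactor sign $(-1)^{n+(n-k)} = (-1)^k$ then cancels the sign $(-1)^k$ of the triangular block's determinant, so the cofactor equals $\alpha_{n-k}\cdots\alpha_{n-1}\det(sI_{n-k-1} - Q_{n-k-1})$ and the expansion reproduces the $\widetilde{g}$-recurrence term by term. The base case $\widetilde{g}_{0,1}(s) = \gamma_0 + s = \det(sI_0 - Q_0)$ follows from $\widetilde{f}_{0,1}(s) = \alpha_0/(\gamma_0+s)$ and $\alpha_0 = \gamma_0$ (forced by the row-sum condition on $Q$), and the induction then closes.

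The main obstacle is the cofactor bookkeeping, which mirrors that of Lemma 2.1 but is slightly more delicate because the diagonal of $sI_n - Q_n$ carries the non-scalar entries $s+\gamma_i$ rather than $1 - r_i s$. One must carefully verify the block-triangular structure of each minor and track the alternating signs so that the final expansion matches the recurrence derived from the first-jump decomposition; this is precisely the step the authors suppress with ``we omit the details''.
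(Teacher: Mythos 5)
Your proposal is correct and is exactly the argument the paper intends: the same induction as Lemma 2.1, with the first-jump decomposition giving the recurrence $\widetilde{g}_{0,n+1}(s)=(s+\gamma_n)\widetilde{g}_{0,n}(s)-\sum_{k=1}^{n}\beta_{n,n-k}\alpha_{n-k}\cdots\alpha_{n-1}\widetilde{g}_{0,n-k}(s)$ and the last-row cofactor expansion of $\det(sI_n-Q_n)$ reproducing it term by term. Your block-triangular analysis of the minors and the sign cancellation are right (the remark that $\alpha_0=\gamma_0$ is true but not actually needed for the base case, since $\alpha_0$ cancels in $\widetilde{g}_{0,1}(s)=\alpha_0/\widetilde{f}_{0,1}(s)=\gamma_0+s$), so this correctly supplies the details the authors omit.
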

Then we can calculate the Laplace transform of $\tau_{0,d}$, recall that  $\lambda_0, \dots, \lambda_{d-1}$ are the $d$ non-zero eigenvalues of $-Q$, we can see they are not equal to $0$ easily. And we have \begin{equation*}\alpha_{0}\alpha_{1}\cdots \alpha_{d-1}= \lambda_{0}\lambda_{1}\cdots\lambda_{d-1}.\end{equation*} So \begin{equation*}\begin{split}\varphi_{d}(s)&=\widetilde{f}_{0,1}(s)\widetilde{f}_{1,2}(s)\cdots \widetilde{f}_{d-1,d}(s)\\&=\frac{\alpha_{0}\alpha_{1}\cdots \alpha_{d-1}}{\text{det}(sI_{d-1}-Q_{d-1})}=\prod_{i=0}^{d-1}\frac{\lambda_i}{1-\lambda_i s}
%\frac{\lambda_{0}\lambda_{1}\cdots\lambda_{d-1}}{(\lambda_{0}+s)(\lambda_{1}+s)
%\cdots(\lambda_{d-1}+s)}
\end{split}\end{equation*}
complete the proof. \qed
%%%%%%%%%%%%%%%%%%%%%%%%%%%%% References %%%%%%%%%%%%%%%%%%%%%%%%%%%%%%%%%%%%\\

\end{document}